\documentclass[12pt]{article}
\usepackage{a4}
\usepackage{amsthm}
\usepackage{amsmath}
\usepackage{amsfonts}
\usepackage{cite}
\usepackage{epsfig}
\usepackage{graphicx}

\usepackage{booktabs}

\usepackage{hyperref}
\newcommand{\footremember}[2]{%
    \footnote{#2}
    \newcounter{#1}
    \setcounter{#1}{\value{footnote}}%
}
\newcommand{\footrecall}[1]{%
    \footnotemark[\value{#1}]%
}

\newtheorem{conjecture}{Conjecture}
\newtheorem{theorem}{Theorem}

\newtheorem{corollary}[theorem]{Corollary}
\newcommand\NN{{\mathbb N}}
\newcommand\eexp{\mbox{e}}
\newcommand\Tr{\mbox{Tr}\;}

\newcommand{\matbd}[1]{\ensuremath{\boldsymbol{#1}}}  
\newcommand{\mA}{\matbd{A}}
\newcommand{\vu}{\ensuremath{\textbf{u}}}

\begin{document}
\title{Walk entropy and walk-regularity}

\author{
        Kyle Kloster\footremember{NCSU}{Department of Computer Science, NC State University, Raleigh, NC 27695, USA. E-mail: {\tt \{kakloste,blair\_sullivan\}@ncsu.edu}. The work of this author was supported in part by the Gordon \& Betty Moore Foundation's Data-Driven Discovery Initiative through Grant GBMF4560 to Blair D.~Sullivan.}
        \and
        Daniel Kr\'al'\footnote{Mathematics Institute, DIMAP and Department of Computer Science, University of Warwick, Coventry CV4 7AL, UK. E-mail: {\tt d.kral@warwick.ac.uk}. The work of this author was supported by the European Research Council (ERC) under the European Union’s Horizon 2020 research and innovation programme (grant agreement No 648509). This publication reflects only its authors' view; the European Research Council Executive Agency is not responsible for any use that may be made of the information it contains.}
        \and
        Blair D.~Sullivan\footrecall{NCSU}
  }

\date{}
\maketitle

\begin{abstract}
    A graph is said to be walk-regular if, for each $\ell \geq 1$, every vertex is contained in the same number of closed walks of length $\ell$.
    We construct a $24$-vertex graph $H_4$ that is not walk-regular yet has maximized walk entropy,
    $S^V(H_4,\beta) = \log 24$, for some $\beta>0$.
    This graph is a counterexample to a conjecture of Benzi
    [Linear Algebra Appl.~443 (2014), 395--399, Conjecture 3.1].
    We also show that there exist infinitely many temperatures $\beta_0>0$ so that
    $S^V(G,\beta_0)=\log n_G$ if and only if a graph $G$ is walk-regular.\newline

    \noindent\textit{MSC:} 05C50\newline

    \noindent\textit{Keywords:} graph entropy; walk-regularity; subgraph centrality; matrix exponential
\end{abstract}

\section{Introduction}

We study the interplay between the structural property of graphs called walk-regularity and
the algebraic property called walk entropy.
A simple graph $G$ is {\em walk-regular}~\cite{bib-godsil01+}
if every vertex of $G$ is contained in the same number of closed walks of length $\ell$ for every $\ell\in\NN$.
Observe that a graph $G$ is walk-regular if and only if
for every $\ell\in\NN$, all the diagonal entries of the power $\mA^\ell$ of the adjacency matrix $\mA$ of $G$ are the same.
Also note that if a graph $G$ is walk-regular, then it is necessarily degree-regular,
i.e., every vertex of $G$ has the same degree.

Estrada et al.~\cite{bib-estrada14+} initiated the study of the
relationship between walk-regularity and an algebraic parameter of a
graph called the walk entropy.
The {\em walk entropy} of a graph $G$ at the {\em temperature} $\beta\ge 0$ is defined as
$$S^V(G,\beta)=-\sum_{i=1}^{n_G} \frac{\left[\eexp^{\beta \mA}\right]_{ii}}{\Tr \eexp^{\beta \mA}} \log \frac{\left[\eexp^{\beta \mA}\right]_{ii}}{\Tr \eexp^{\beta \mA}}\;\mbox{,}$$
where $n_G$ denotes the number of vertices of $G$ (in general,
we use $n_H$ for the number of vertices of a graph $H$ throughout the paper).
In other words, the walk entropy is the entropy associated with the probability distribution on
the vertex set $V(G)$ that is linearly proportional to the subgraph centrality of the vertices.
We note that any probability distribution on $V(G)$ gives rise to a corresponding notion of graph entropy; Dehmer~\cite{dehmer2008information} called such distributions \emph{information functionals},
and introduced this more general class of graph entropies.

The \emph{subgraph centrality} of the $i$-th vertex of a graph $G$~\cite{bib-estrada05+}
is equal to $\left[\eexp^{\beta \mA}\right]_{ii}$, the corresponding diagonal entry of $\eexp^{\beta \mA}$.
Note that the walk entropy $S^V(G,\beta)\in [0,\log n_G]$ and
$S^V(G,\beta)=\log n_G$ if and only if all the diagonal entries of $\eexp^{\beta \mA}$ are the same.
That is, walk entropy is maximized precisely when all the vertices have the same subgraph centrality.

It is easy to see that if a graph $G$ is walk-regular,
then its walk entropy $S^V(G,\beta)$ is equal to $\log n_G$ for every $\beta\ge 0$.
Estrada et al.~\cite{bib-estrada14+} conjectured that the converse is also true.
\begin{conjecture}[{Estrada et al.~\cite[Conjecture 1]{bib-estrada14+}}]
\label{conj-estrada}
A graph $G$ is walk-regular if and only if $S^V(G,\beta)=\log n_G$ for all $\beta\geq0$.
\end{conjecture}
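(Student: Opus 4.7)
The plan is to prove the conjecture via a direct analyticity argument applied to the subgraph centrality function $\beta\mapsto[\eexp^{\beta\mA}]_{ii}$. The forward implication is essentially a restatement of definitions: if $G$ is walk-regular, then for every $\ell\in\NN$ the diagonal entry $[\mA^\ell]_{ii}$ does not depend on $i$, so the power series
\[
[\eexp^{\beta\mA}]_{ii} \;=\; \sum_{\ell\ge 0}\frac{\beta^\ell}{\ell!}[\mA^\ell]_{ii}
\]
has the same value at every vertex, the associated probability distribution on $V(G)$ is uniform, and $S^V(G,\beta)=\log n_G$ for every $\beta\ge 0$.

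For the converse, I would assume $S^V(G,\beta)=\log n_G$ for all $\beta\ge 0$. As noted in the excerpt, this is equivalent to all diagonal entries of $\eexp^{\beta\mA}$ coinciding. Fix a pair of vertices $i,j$ and set
\[
f_{ij}(\beta) \;:=\; [\eexp^{\beta\mA}]_{ii}-[\eexp^{\beta\mA}]_{jj}.
\]
Each $f_{ij}$ is entire in $\beta$, because the matrix exponential is an absolutely convergent power series in $\beta$ on all of $\mathbb{C}$, and by hypothesis $f_{ij}$ vanishes on $[0,\infty)$. Since $[0,\infty)$ has accumulation points, the identity theorem for analytic functions forces $f_{ij}\equiv 0$ as a power series; matching Taylor coefficients at $\beta=0$ yields $[\mA^\ell]_{ii}=[\mA^\ell]_{jj}$ for every $\ell\in\NN$. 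As $i,j$ were arbitrary, $G$ is walk-regular.

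Because both implications collapse to a comparison of two entire functions of $\beta$, there is no serious obstacle for the conjecture as stated; the only care required is the routine verification that $[\eexp^{\beta\mA}]_{ii}$ is entire and the (standard) invocation of the identity theorem. The genuinely interesting question, and presumably the focus of the remainder of the paper, is how far the hypothesis ``for all $\beta\ge 0$'' can be relaxed. The constructed graph $H_4$ witnesses that \emph{a single} temperature $\beta_0>0$ with $S^V(G,\beta_0)=\log n_G$ is not sufficient to imply walk-regularity, so the above analyticity argument cannot be sharpened trivially. The main technical challenge for the paper's stronger result, asserting that \emph{infinitely many} such $\beta_0$ suffice, is therefore to understand the structure of the (necessarily discrete) zero sets of the finitely many entire functions $f_{ij}$ when $G$ is not walk-regular, and to combine this with a pigeonhole-type argument so that coincidences at infinitely many $\beta_0$ cannot all be accidental.
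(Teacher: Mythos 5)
Your proof is correct, and it is essentially the same argument as the one the paper relies on: the statement is Conjecture~1, which the paper does not prove itself but notes was established by Benzi in the stronger form of Theorem~\ref{thm-benzi}, precisely via the analyticity of $\beta\mapsto[\eexp^{\beta\mA}]_{ii}$ and the identity theorem on a set with an accumulation point. Your forward direction and the Taylor-coefficient comparison in the converse match that approach exactly.
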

The conjecture was proven by Benzi in the following stronger form.
\begin{theorem}[{Benzi~\cite[Theorem 2.2]{bib-benzi14}}]
\label{thm-benzi}
Let $I$ be any set of real numbers containing an accumulation point.
If a graph $G$ satisfies $S^V(G,\beta)=\log n_G$ for all $\beta\in I$,
then $G$ is walk-regular.
\end{theorem}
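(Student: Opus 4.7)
The plan is to exploit the analyticity of $[\eexp^{\beta\mA}]_{ii}$ as a function of $\beta$ together with the hypothesis that $I$ has an accumulation point, then invoke the identity theorem for analytic functions to promote equality on $I$ into equality of all Taylor coefficients.

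First I would unpack the hypothesis. Since $S^V(G,\beta)=\log n_G$ holds if and only if all diagonal entries of $\eexp^{\beta\mA}$ coincide (the maximum entropy of a distribution on $n_G$ points being attained exactly by the uniform distribution), for every pair $i,j\in V(G)$ and every $\beta\in I$ we have
$$
f_{ij}(\beta) \;:=\; \left[\eexp^{\beta\mA}\right]_{ii} - \left[\eexp^{\beta\mA}\right]_{jj} \;=\; 0.
$$
Next I would observe that each $f_{ij}$ is entire as a function of the complex variable $\beta$: indeed, the matrix-valued power series $\eexp^{\beta\mA}=\sum_{\ell\ge 0}\beta^\ell \mA^\ell/\ell!$ converges absolutely on all of $\mathbb{C}$, and selecting an $(i,i)$ or $(j,j)$ entry is a continuous linear functional, so the scalar series
$$
f_{ij}(\beta) \;=\; \sum_{\ell=0}^{\infty} \frac{\beta^\ell}{\ell!}\left([\mA^\ell]_{ii}-[\mA^\ell]_{jj}\right)
$$
represents $f_{ij}$ on $\mathbb{C}$.

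Now I would apply the identity theorem: the entire function $f_{ij}$ vanishes on $I\subseteq\mathbb{R}\subseteq\mathbb{C}$, and $I$ has an accumulation point, so $f_{ij}\equiv 0$ on $\mathbb{C}$. Matching Taylor coefficients at $\beta=0$ gives $[\mA^\ell]_{ii}=[\mA^\ell]_{jj}$ for every $\ell\ge 0$. Since this holds for all pairs $i,j$, all diagonal entries of every power $\mA^\ell$ are equal, which is precisely walk-regularity.

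There is no real obstacle here; the proof is essentially a one-line invocation of the identity theorem once one recognizes that the map $\beta\mapsto [\eexp^{\beta\mA}]_{ii}$ is entire. The only thing that requires a moment's care is the initial reduction from ``walk entropy equals $\log n_G$'' to ``diagonal entries are equal,'' which follows from the strict concavity of the Shannon entropy and the fact that the probabilities in the definition of $S^V(G,\beta)$ sum to one by construction.
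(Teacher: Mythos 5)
Your proof is correct and is essentially the argument Benzi gives for this result; the paper itself only cites the theorem from \cite{bib-benzi14} without reproving it, so there is nothing to compare beyond noting that equation~(\ref{eq-diag}) records the same analyticity fact you rely on. The reduction from maximal entropy to equal diagonal entries, the entireness of $\beta\mapsto[\eexp^{\beta\mA}]_{ii}$, the identity theorem applied at the accumulation point of $I$, and the matching of Taylor coefficients to get $[\mA^\ell]_{ii}=[\mA^\ell]_{jj}$ for all $\ell$ are all sound and complete.
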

Benzi also proposed the following strengthening of his result.
\begin{conjecture}[{Benzi~\cite[Conjecture 3.1]{bib-benzi14}}]
\label{conj-benzi}
A graph $G$ is walk-regular if and only if there exists $\beta>0$ such that $S^V(G,\beta)=\log n_G$.
\end{conjecture}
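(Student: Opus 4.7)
Since the abstract announces a $24$-vertex counterexample $H_4$, I would organize the disproof of this conjecture into two parts: (i) produce a candidate graph $H_4$ and verify that it is not walk-regular, and (ii) exhibit a specific $\beta>0$ at which all its subgraph centralities coincide.

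For (i), the natural move is to search for a regular but non-walk-regular graph whose automorphism group has only a small number of vertex orbits, because this symmetry drastically reduces the number of scalar conditions that must be simultaneously satisfied at the common $\beta$. In particular, if $H_4$ has only two vertex orbits, represented by $u$ and $v$, then $[\eexp^{\beta\mA}]_{ii}$ is constant on each orbit and the requirement $S^V(H_4,\beta)=\log n_{H_4}$ collapses to the single equation
\[ g(\beta) := \left[\eexp^{\beta\mA}\right]_{uu} - \left[\eexp^{\beta\mA}\right]_{vv} = 0. \]
Non-walk-regularity would then be checked directly by exhibiting a small $\ell$ with $[\mA^\ell]_{uu}\neq[\mA^\ell]_{vv}$; the subscript $4$ on $H_4$ suggests $\ell=4$ will suffice.

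For (ii), the key tool is the spectral decomposition
\[ g(\beta) = \sum_{m} \left(v_{m,u}^2 - v_{m,v}^2\right) \eexp^{\beta \lambda_m}, \]
where $\lambda_m$ and $\mathbf{v}_m$ are the eigenvalues and orthonormal eigenvectors of $\mA$. For a connected regular $H_4$ the Perron eigenvector is constant and contributes $0$, and because $[\mA^0]_{uu}=[\mA^0]_{vv}=1$ the remaining coefficients $c_m=v_{m,u}^2-v_{m,v}^2$ sum to $0$. Provided at least one $c_m$ is strictly positive and at least one strictly negative, $g$ is a nontrivial linear combination of distinct exponentials with coefficients of mixed sign, so it must change sign on $(0,\infty)$, and the intermediate value theorem delivers a positive root $\beta_0$. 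Note that by Theorem~\ref{thm-benzi} such a $\beta_0$ is necessarily isolated, so one cannot hope to prove anything stronger than a single-value coincidence.

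The main obstacle, and presumably the creative heart of the paper, is the construction of $H_4$ itself, because the two conditions ``has only two vertex orbits'' and ``fails walk-regularity'' pull in opposite directions: graphs with very transitive automorphism groups are walk-regular, so $H_4$ must have just the right (and rather delicate) level of symmetry. I would expect the authors to build $H_4$ from smaller symmetric ingredients (for instance, as a symmetric composition of two cospectral mates, or via a Godsil--McKay-type switching applied to a known walk-regular graph on $24$ vertices), then verify the two orbit structure and the failure of walk-regularity combinatorially, and finally locate the root $\beta_0$ either in closed form or numerically from the spectrum of $H_4$.
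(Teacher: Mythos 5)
Your overall skeleton --- choose a graph with only two vertex orbits so that maximality of the walk entropy collapses to the single scalar equation $g(\beta)=0$, then apply the intermediate value theorem --- is exactly the paper's strategy. But there are two genuine gaps. First, your criterion for producing a root is wrong: a nontrivial linear combination of distinct exponentials whose coefficients have mixed signs and sum to zero need \emph{not} change sign on $(0,\infty)$; for instance $\eexp^{2\beta}-2\eexp^{\beta}+1=(\eexp^{\beta}-1)^2$ is nonnegative everywhere and vanishes only at $\beta=0$. So ``at least one $c_m>0$ and at least one $c_m<0$'' does not deliver a positive root of $g$, and this is precisely the step that needs real work. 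The paper establishes an actual sign change by evaluating the two orbit functions at the two ends of a concrete interval: near $\beta=0$ the diagonal of $\eexp^{\beta\mA}$ is governed by $[\mA^2]_{ii}$, i.e.\ by the vertex degrees, which forces $g>0$ on some $(0,\varepsilon)$, while a direct numerical computation gives $g(1)<0$ (the two diagonal values at $\beta=1$ are approximately $6.481$ and $7.175$); the intermediate value theorem then yields a root near $0.499$.

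Second, your structural guess pulls in the wrong direction in a way that matters for the argument. The paper's $H_4$ is \emph{not} degree-regular: it is built from $4$ isolated vertices joined by perfect matchings to each of $5$ disjoint copies of $K_4$, so it has $4$ vertices of degree $5$ and $20$ of degree $4$. This is deliberate --- it simultaneously refutes Conjecture~\ref{conj-estrada-non-regular} --- and it is also what makes the sign change tractable: the degree difference pins down the sign of $g$ near $0$ (via $g''(0)=5-4=1$), while the many triangles and four-cycles through the degree-$4$ vertices reverse the inequality at moderate $\beta$. If you insist on a connected \emph{regular} candidate, as your part (i) does, then both the Perron term and the degree term drop out of $g$ and you lose the handle that controls the sign of $g$ near $0$, so your part (ii) would have to be rebuilt from scratch. (Minor points: non-walk-regularity of the paper's graph is already witnessed at $\ell=2$ since it is not degree-regular, and the subscript in $H_4$ is the parameter $m$ of a general family $H_m$, not the length of a distinguishing closed walk.)
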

From the contrapositive, Estrada et al.~\cite{bib-estrada14+, bib-estrada14++} proposed that non--degree-regular graphs cannot have maximum walk entropy.
\begin{conjecture}[{Estrada et al.~\cite[Conjecture 1.2]{bib-estrada14++}}]
\label{conj-estrada-non-regular}
Let $G$ be a non--degree-regular graph. Then $S^V(G, \beta) < \log n_G$ for every $\beta > 0$.
\end{conjecture}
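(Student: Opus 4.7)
The conjecture is the contrapositive statement that $\left[\eexp^{\beta \mA}\right]_{ii} = \left[\eexp^{\beta \mA}\right]_{jj}$ for all pairs $i,j$ at any single positive $\beta$ must force $(\mA^2)_{ii} = (\mA^2)_{jj}$, i.e.\ degree-regularity. My plan is to study the analytic function
$$h_{ij}(\beta) \;=\; \left[\eexp^{\beta \mA}\right]_{ii} - \left[\eexp^{\beta \mA}\right]_{jj} \;=\; \sum_{\ell \geq 2} \frac{\beta^\ell}{\ell!}\bigl[(\mA^\ell)_{ii} - (\mA^\ell)_{jj}\bigr]$$
and to show that whenever $d_i \neq d_j$, $h_{ij}$ has no positive zero.

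The easy observation is the small-$\beta$ expansion $h_{ij}(\beta) = \tfrac{\beta^2}{2}(d_i - d_j) + O(\beta^3)$, which forces a definite sign on some interval $(0,\beta_0)$ and handles the conjecture at small temperatures. To push the conclusion to all $\beta > 0$, I would pass to the spectral resolution $\mA = \sum_k \lambda_k u_k u_k^\top$ and rewrite $h_{ij}(\beta) = \sum_k a_k \eexp^{\beta \lambda_k}$ with $a_k = u_k(i)^2 - u_k(j)^2$. Orthonormality gives $\sum_k a_k = 0$, while the second spectral moment satisfies $\sum_k \lambda_k^2 a_k = d_i - d_j \neq 0$. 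A Laguerre/Descartes-type bound limits the number of positive zeros of such an exponential sum by the number of sign changes of $(a_k)$ ordered by $\lambda_k$, so the natural route is to exploit the degree asymmetry---perhaps through the Perron--Frobenius eigenvector, whose entries are strictly positive on any connected component and positively correlated with local degree---to argue that the sign pattern of $(a_k)$ is too simple to permit a zero crossing.

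The principal obstacle is that these moment conditions are weak: only two linear constraints on the $a_k$ leave enormous freedom for the exponential sum to oscillate, and the counterexample $H_4$ constructed in the paper shows that in walk-irregular graphs an analogous difference really can vanish at isolated positive $\beta$. A proof of Conjecture~\ref{conj-estrada-non-regular} must therefore extract genuinely more combinatorial structure from the hypothesis $d_i \neq d_j$ than the first two spectral moments reveal. I would expect the crux to be either a monotonicity or log-convexity property of $h_{ij}$ forced by degree-irregularity but not by walk-irregularity, or an integral representation of $h_{ij}$ exposing strict positivity; identifying such a structural feature---and thereby cleanly distinguishing degree-regular from walk-regular behaviour---is where the real difficulty lies, and the absence of any obvious such feature would be consistent with the conjecture being false.
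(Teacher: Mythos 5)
There is a fundamental problem with the proposal: the statement you are trying to prove is false, and the paper's entire treatment of it consists of refuting it rather than proving it. The graph $H_4$ is not degree-regular --- it has four vertices of degree $5$ and twenty of degree $4$ --- and yet Theorem~\ref{thm-counter} shows $S^V(H_4,\beta)=\log 24$ for some $\beta\in(0,1)$. That is a direct counterexample to Conjecture~\ref{conj-estrada-non-regular}, not merely to the walk-regularity version (Conjecture~\ref{conj-benzi}). Your closing paragraph misreads this point: you describe $H_4$ as showing only that ``in walk-irregular graphs an analogous difference really can vanish,'' but since $H_4$ is degree-irregular, it kills precisely the claim your plan hinges on, namely that $d_i\neq d_j$ forces $h_{ij}(\beta)=\left[\eexp^{\beta\mA}\right]_{ii}-\left[\eexp^{\beta\mA}\right]_{jj}$ to have no positive zero. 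Concretely, taking $i=1$ (degree $5$) and $j=5$ (degree $4$) in $H_4$, one has $h_{ij}''(0)=5-4>0$ so $h_{ij}>0$ near $0$, while $h_{ij}(1)\approx 6.481-7.175<0$; by the intermediate value theorem $h_{ij}$ vanishes in $(0,1)$, and by the automorphism structure of $H_4$ every diagonal entry equals one of these two functions, so all diagonal entries of $\eexp^{\beta\mA}$ coincide at that $\beta$.

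Your small-$\beta$ analysis is sound (and is exactly how the paper establishes the sign of $f_1-f_2$ near the origin), and your instinct that two spectral moment constraints are far too weak to control the sign of an exponential sum on all of $(0,\infty)$ is the right diagnosis of why no proof along these lines can work. But a proof attempt should have been abandoned here in favor of a counterexample search: the mechanism the paper exploits --- low-degree vertices packed into cliques so that triangle and four-cycle counts overwhelm the degree advantage in the middle range of $\beta$ --- is exactly the ``oscillation'' your moment conditions fail to exclude. The correct resolution of this statement is its refutation by $H_4$.
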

Estrada et al.~\cite{bib-estrada14++} attempted to prove Conjectures~\ref{conj-benzi}~and~\ref{conj-estrada-non-regular}, but their argument contains a flaw.
In Section~\ref{sect-counter}
we show that Conjectures~\ref{conj-benzi}~and~\ref{conj-estrada-non-regular} are false
by presenting a 24-vertex graph, which we denote $H_4$, that is not walk-regular yet attains $S^V(H_4,\beta)=\log 24$ for some $\beta>0$.
The graph $H_4$ contains vertices of degree four and five, i.e., it is not even degree-regular,
which resolves the question, mentioned in the concluding remarks in~\cite{bib-benzi14}, of whether degree-regularity is implied by the existence of a $\beta$ that maximizes walk entropy.
On the positive side, we show that there exist infinitely many temperatures $\beta_0>0$ such that a graph $G$ is walk-regular if and only if $S^V(G,\beta_0)=\log n_G$ (Corollary~\ref{cor-temp}),
i.e., there are temperatures that properly classify walk-regularity.

\section{Non--degree-regular graph maximizing walk entropy}
\label{sect-counter}

In this section, we present a counterexample to Conjectures~\ref{conj-benzi}
and~\ref{conj-estrada-non-regular}.
We start by presenting a closed formula for the diagonal entries of the exponential of the adjacency matrix of a graph.
Let $G$ be a graph and $\mA$ its adjacency matrix.
Further, let $\lambda_1,\ldots,\lambda_{n_G}$ be the eigenvalues of $\mA$ and
$\vu_1,\ldots,\vu_{n_G}$ an orthonormal basis formed by the eigenvectors of $\mA$.
It follows that
\begin{equation}
\left[\eexp^{\beta \mA}\right]_{ii}=\sum_{k=1}^{n_G} \vu_{k,i}^2\cdot\eexp^{\beta\lambda_k}
\label{eq-diag}
\end{equation}
for every $i=1,\ldots,n_G$.
In particular, each diagonal entry of $\eexp^{\beta \mA}$ is an analytic function of $\beta$,
which is a linear combination of at most $n_G$ exponential functions.

We next present a construction of graphs $H_m$ parameterized by a positive integer $m\in\NN$.
The graph $H_m$ is obtained from $m$ isolated vertices and $m+1$ cliques of order $m$
by including a perfect matching between the $m$ isolated vertices and each of the $m+1$ cliques.
The graph $H_m$ has $m+(m+1)m=m^2+2m$ vertices; $m$ vertices have degree $m+1$ and
the remaining $m^2+m$ vertices have degree $m$.
We are convinced that $H_m$ is a counterexample to Conjecture~\ref{conj-benzi} for~every $m\ge 4$;
however, we will here analyze the case $m=4$ only.
The graph $H_4$ and its adjacency matrix are presented in Figure~\ref{fig-G24}.

\begin{figure}
\begin{center}
\epsfbox{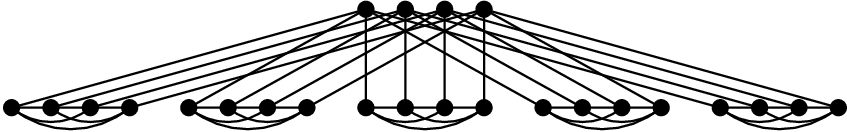}\\
\vspace{10pt}
\scalebox{0.82}{
$\matbd{A}_{K_4} = \left(
\begin{array}{cccc}
  0 & 1 & 1 & 1 \\
  1 & 0 & 1 & 1 \\
  1 & 1 & 0 & 1 \\
  1 & 1 & 1 & 0
\end{array}
\right)$,
$\matbd{I}_4 = \left(
\begin{array}{cccc}
  1 & 0 & 0 & 0 \\
  0 & 1 & 0 & 0 \\
  0 & 0 & 1 & 0 \\
  0 & 0 & 0 & 1
\end{array}
\right)$,
$\mA_{H_4} = \left(
\begin{array}{cccccc}
0 & \matbd{I}_4 & \matbd{I}_4 & \matbd{I}_4 & \matbd{I}_4 & \matbd{I}_4 \\
\matbd{I}_4 & \mA_{K_4} & 0 & 0 & 0 & 0 \\
\matbd{I}_4 & 0 & \mA_{K_4} & 0 & 0 & 0 \\
\matbd{I}_4 & 0 & 0 & \mA_{K_4} & 0 & 0 \\
\matbd{I}_4 & 0 & 0 & 0 & \mA_{K_4} & 0 \\
\matbd{I}_4 & 0 & 0 & 0 & 0 & \mA_{K_4}
\end{array}
\right)$
}
\end{center}
\caption{The graph $H_4$ and its adjacency matrix $\mA_{H_4}$.}
\label{fig-G24}\label{tab-adj}
\end{figure}

\begin{theorem}\label{thm-counter}
There exists $\beta>0$ such that $S^V(H_4,\beta)=\log 24$.
\end{theorem}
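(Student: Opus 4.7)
The plan is to exploit the symmetries of $H_4$ to reduce the claim to verifying that an explicit finite exponential sum in $\beta$ has a positive zero. The automorphism group of $H_4$ acts transitively on the four central degree-$5$ vertices and transitively on the twenty clique vertices, so $[\eexp^{\beta\mA}]_{ii}$ depends only on the class of $i$. Writing $P_c$ for the orthogonal projection onto central coordinates, the diagonal entries of $\eexp^{\beta\mA}$ all coincide exactly when $\Tr P_c \eexp^{\beta\mA}=\tfrac{4}{24}\Tr \eexp^{\beta\mA}$, i.e., when
$$f(\beta)\;:=\;6\,\Tr P_c \eexp^{\beta\mA}\;-\;\Tr \eexp^{\beta\mA}\;=\;0,$$
and by~(\ref{eq-diag}) $f$ is a finite linear combination of exponentials in $\beta$. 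So the task reduces to exhibiting a positive root of $f$.

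To write $f$ down explicitly I would diagonalize $\mA_{H_4}$ via its block structure. Treating an eigenvector as $(x_0;x_1;\ldots;x_5)$ with $x_0\in\mathbb{R}^4$ on the central block and $x_j\in\mathbb{R}^4$ on the $j$-th clique, the equation $\mA v=\lambda v$ is equivalent to $\sum_{j=1}^5 x_j=\lambda x_0$ together with $x_0+\mA_{K_4}x_j=\lambda x_j$ for each $j$. A case split on whether $x_0=0$, and otherwise on which eigenspace of $\mA_{K_4}$ contains $x_0$, yields the full spectrum of $\mA_{H_4}$: $\lambda=3$ (multiplicity $4$) and $\lambda=-1$ (multiplicity $12$) arise with $x_0=0$, hence carry no mass on central coordinates; the simple eigenvalues $\lambda_\pm=\tfrac{3\pm\sqrt{29}}{2}$, roots of $\lambda^2-3\lambda-5=0$, arise with $x_0$ parallel to the all-ones vector; and the triple eigenvalues $\mu_\pm=\tfrac{-1\pm\sqrt{21}}{2}$, roots of $\mu^2+\mu-5=0$, arise with $x_0$ in the $(-1)$-eigenspace of $\mA_{K_4}$. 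In the last four cases $x_j$ is a scalar multiple of $x_0$, and using the identities $(\lambda-3)^2=14-3\lambda$ and $(\mu+1)^2=6+\mu$ on the respective characteristic curves, the total squared mass on central coordinates in each eigenspace becomes a simple rational function of $\lambda$ or $\mu$; substitution into the definition of $f$ then gives an explicit six-term exponential sum.

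The final step is a short intermediate-value argument. From the definition, $f(0)=6\Tr P_c-24=0$; differentiating under the exponential and using $\Tr\mA=0$ (no loops) together with $[\mA^2]_{ii}=\deg(i)$ yields $f'(0)=0$ and $f''(0)=6(4\cdot 5)-(4\cdot 5+20\cdot 4)=20>0$, so $f(\beta)=10\beta^2+O(\beta^3)$ is strictly positive for all sufficiently small $\beta>0$. A direct numerical evaluation of the closed form at $\beta=1$, on the other hand, shows $f(1)<0$. By the intermediate value theorem $f$ has a zero $\beta_0\in(0,1)$, and at this temperature $S^V(H_4,\beta_0)=\log 24$. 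I expect the main obstacle to be the bookkeeping in the spectral analysis: correctly enumerating the six eigenspaces, identifying the right quadratic equation and $\mA_{K_4}$-eigenspace for each, and summing squared central-coordinate magnitudes over an orthonormal basis of each eigenspace (with the extra factor $3$ for the triple eigenvalues). Once that closed form is in hand, the Taylor expansion of $f$ at $0$ together with the single numerical check at $\beta=1$ are routine.
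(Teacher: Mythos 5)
Your proposal is correct and follows essentially the same route as the paper: by symmetry there are only two distinct diagonal entries $f_1,f_2$ of $\eexp^{\beta\mA}$, your $f=6\,\Tr P_c\eexp^{\beta\mA}-\Tr\eexp^{\beta\mA}=20(f_1-f_2)$ is positive near $0$ because $f''(0)=20>0$ (i.e.\ $f_1''(0)=5>4=f_2''(0)$), a numerical check at $\beta=1$ gives the opposite sign, and the intermediate value theorem finishes the argument exactly as in the paper. The only cosmetic differences are that you derive the spectrum from the block structure rather than tabulating eigenvectors, and you package the two diagonal functions into a single exponential sum.
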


\begin{proof}
Let $\mA$ be the adjacency matrix of the graph $H_4$.
The matrix $\mA$ has six different eigenvalues,
which are given with corresponding eigenvectors in Table~\ref{tab-eigen}.
Note that the vectors given in Table~\ref{tab-eigen} are not normalized to be unit and orthogonal.
We will show that there exists $\beta>0$ such that
all the diagonal entries of the matrix $\eexp^{\beta \mA}$ are the same,
which yields the statement of the theorem.

\begin{table}
\begin{center}
  \scalebox{0.8}{
    $\begin{array}{cc}
    \mbox{Eigenvalue} & \mbox{ \hspace{178pt} Eigenvector \hspace{178pt}} \\
    \end{array}$
  }
\scalebox{0.6}{
$\begin{array}{c@{\hskip 0.05in}lllllllr}
\toprule
\vspace{2pt}
\frac{3+\sqrt{29}}{2}\approx 4.193 &
\Big( \hspace{3pt} \frac{10}{3+\sqrt{29}}, \frac{10}{3+\sqrt{29}}, \frac{10}{3+\sqrt{29}}, \frac{10}{3+\sqrt{29}}, &
1, 1, 1, 1, & 1, 1, 1, 1, & 1, 1, 1, 1, & 1, 1, 1, 1, & 1, 1, 1, 1 & \Big) \\ \vspace{2pt}

3 &
\Big( \hspace{3pt}  0, 0, 0, 0, &
1, 1, 1, 1, & -1, -1, -1, -1, & 0, 0, 0, 0, & 0, 0, 0, 0, & 0, 0, 0, 0 & \Big) \\ \vspace{2pt}
3 &
\Big( \hspace{3pt}  0, 0, 0, 0, &
1, 1, 1, 1, & 0, 0, 0, 0, & -1, -1, -1, -1, & 0, 0, 0, 0, & 0, 0, 0, 0 & \Big) \\ \vspace{2pt}
3 &
\Big( \hspace{3pt}  0, 0, 0, 0, &
1, 1, 1, 1, & 0, 0, 0, 0, & 0, 0, 0, 0, & -1, -1, -1, -1, & 0, 0, 0, 0 & \Big) \\ \vspace{2pt}
3 &
\Big( \hspace{3pt}  0, 0, 0, 0, &
1, 1, 1, 1, & 0, 0, 0, 0, & 0, 0, 0, 0, & 0, 0, 0, 0, & -1, -1, -1, -1 & \Big) \\ \vspace{2pt}

\frac{-1+\sqrt{21}}{2}\approx 1.791 &
\Big( \hspace{3pt}  \frac{10}{-1+\sqrt{21}}, \frac{-10}{-1+\sqrt{21}}, 0, 0, &
1, -1, 0, 0, & 1, -1, 0, 0, & 1, -1, 0, 0, & 1, -1, 0, 0, & 1, -1, 0, 0 & \Big) \\ \vspace{2pt}
\frac{-1+\sqrt{21}}{2}\approx 1.791 &
\Big( \hspace{3pt}  \frac{10}{-1+\sqrt{21}}, 0, \frac{-10}{-1+\sqrt{21}}, 0, &
1, 0, -1, 0, & 1, 0, -1, 0, & 1, 0, -1, 0, & 1, 0, -1, 0, & 1, 0, -1, 0 & \Big) \\ \vspace{2pt}
\frac{-1+\sqrt{21}}{2}\approx 1.791 &
\Big( \hspace{3pt}  \frac{10}{-1+\sqrt{21}}, 0, 0, \frac{-10}{-1+\sqrt{21}}, &
1, 0, 0, -1, & 1, 0, 0, -1, & 1, 0, 0, -1, & 1, 0, 0, -1, & 1, 0, 0, -1 & \Big) \\ \vspace{2pt}

-1 &
\Big( \hspace{3pt}  0, 0, 0, 0, &
1, -1, 0, 0, & -1, 1, 0, 0, & 0, 0, 0, 0, & 0, 0, 0, 0, & 0, 0, 0, 0 & \Big) \\ \vspace{2pt}
-1 &
\Big( \hspace{3pt}  0, 0, 0, 0, &
1, 0, -1, 0, & -1, 0, 1, 0, & 0, 0, 0, 0, & 0, 0, 0, 0, & 0, 0, 0, 0 & \Big) \\ \vspace{2pt}
-1 &
\Big( \hspace{3pt}  0, 0, 0, 0, &
1, 0, 0, -1, & -1, 0, 0, 1, & 0, 0, 0, 0, & 0, 0, 0, 0, & 0, 0, 0, 0 & \Big) \\ \vspace{2pt}
-1 &
\Big( \hspace{3pt}  0, 0, 0, 0, &
1, -1, 0, 0, & 0, 0, 0, 0, & -1, 1, 0, 0, & 0, 0, 0, 0, & 0, 0, 0, 0 & \Big) \\ \vspace{2pt}
-1 &
\Big( \hspace{3pt}  0, 0, 0, 0, &
1, 0, -1, 0, & 0, 0, 0, 0, & -1, 0, 1, 0, & 0, 0, 0, 0, & 0, 0, 0, 0 & \Big) \\ \vspace{2pt}
-1 &
\Big( \hspace{3pt}  0, 0, 0, 0, &
1, 0, 0, -1, & 0, 0, 0, 0, & -1, 0, 0, 1, & 0, 0, 0, 0, & 0, 0, 0, 0 & \Big) \\ \vspace{2pt}
-1 &
\Big( \hspace{3pt}  0, 0, 0, 0, &
1, -1, 0, 0, & 0, 0, 0, 0, & 0, 0, 0, 0, & -1, 1, 0, 0, & 0, 0, 0, 0 & \Big) \\ \vspace{2pt}
-1 &
\Big( \hspace{3pt}  0, 0, 0, 0, &
1, 0, -1, 0, & 0, 0, 0, 0, & 0, 0, 0, 0, & -1, 0, 1, 0, & 0, 0, 0, 0 & \Big) \\ \vspace{2pt}
-1 &
\Big( \hspace{3pt}  0, 0, 0, 0, &
1, 0, 0, -1, & 0, 0, 0, 0, & 0, 0, 0, 0, & -1, 0, 0, 1, & 0, 0, 0, 0 & \Big) \\ \vspace{2pt}
-1 &
\Big( \hspace{3pt}  0, 0, 0, 0, &
1, -1, 0, 0, & 0, 0, 0, 0, & 0, 0, 0, 0, & 0, 0, 0, 0, & -1, 1, 0, 0 & \Big) \\ \vspace{2pt}
-1 &
\Big( \hspace{3pt}  0, 0, 0, 0, &
1, 0, -1, 0, & 0, 0, 0, 0, & 0, 0, 0, 0, & 0, 0, 0, 0, & -1, 0, 1, 0 & \Big) \\ \vspace{2pt}
-1 &
\Big( \hspace{3pt}  0, 0, 0, 0, &
1, 0, 0, -1, & 0, 0, 0, 0, & 0, 0, 0, 0, & 0, 0, 0, 0, & -1, 0, 0, 1 & \Big) \\ \vspace{2pt}

\frac{3-\sqrt{29}}{2}\approx -1.193 &
\Big( \hspace{3pt}  \frac{10}{3-\sqrt{29}}, \frac{10}{3-\sqrt{29}}, \frac{10}{3-\sqrt{29}}, \frac{10}{3-\sqrt{29}}, &
1, 1, 1, 1, & 1, 1, 1, 1, & 1, 1, 1, 1, & 1, 1, 1, 1, & 1, 1, 1, 1 & \Big) \\ \vspace{2pt}

\frac{-1-\sqrt{21}}{2}\approx -2.791 &
\Big( \hspace{3pt}  \frac{10}{-1-\sqrt{21}}, \frac{-10}{-1-\sqrt{21}}, 0, 0, &
1, -1, 0, 0, & 1, -1, 0, 0, & 1, -1, 0, 0, & 1, -1, 0, 0, & 1, -1, 0, 0 & \Big) \\ \vspace{2pt}
\frac{-1-\sqrt{21}}{2}\approx -2.791 &
\Big( \hspace{3pt}  \frac{10}{-1-\sqrt{21}}, 0, \frac{-10}{-1-\sqrt{21}}, 0, &
1, 0, -1, 0, & 1, 0, -1, 0, & 1, 0, -1, 0, & 1, 0, -1, 0, & 1, 0, -1, 0 & \Big) \\ \vspace{2pt}
\frac{-1-\sqrt{21}}{2}\approx -2.791 &
\Big( \hspace{3pt}  \frac{10}{-1-\sqrt{21}}, 0, 0, \frac{-10}{-1-\sqrt{21}}, &
1, 0, 0, -1, & 1, 0, 0, -1, & 1, 0, 0, -1, & 1, 0, 0, -1, & 1, 0, 0, -1 & \Big)

\end{array}
$}
\end{center}
\caption{The eigenvalues and the corresponding eigenvectors of the adjacency matrix of the graph $H_4$.}
\label{tab-eigen}
\end{table}

Let $f_1(\beta)=\left[\eexp^{\beta \mA}\right]_{11}$ and $f_2(\beta)=\left[\eexp^{\beta \mA}\right]_{55}$.
Note that $\left[\eexp^{\beta \mA}\right]_{ii}=f_1(\beta)$ for $i=1,\ldots,4$ and
$\left[\eexp^{\beta \mA}\right]_{ii}=f_2(\beta)$ for $i=5,\ldots,24$.
Observe that the $k$-th derivative of $f_i(\beta)$ for $\beta=0$ is equal
to the corresponding diagonal entry of $\mA^k$.
In particular, $f_1(0)=f_2(0)=1$, $f'_1(0)=f'_2(0)=0$, $f''_1(0)=5$ and $f''_2(0)=4$.
This implies that there exists $\varepsilon>0$ such that $f_1(\beta)>f_2(\beta)$ for all $\beta\in (0,\varepsilon)$.
On the other hand, it holds that $f_1(1)<f_2(1)$; a direct computation shows that
$f_1(1)\approx 6.481$ and $f_2(1)\approx 7.175$.
Since both $f_1(\beta)$ and $f_2(\beta)$ are continuous functions of $\beta$ on the interval $[0,1]$,
there exists $\beta\in (0,1)$ such that $f_1(\beta)=f_2(\beta)$,
i.e., such that all the diagonal entries of $\eexp^{\beta \mA}$ are the same.
\end{proof}

A numerical computation yields that the value of $\beta$
from the proof of Theorem~\ref{thm-counter} is approximately $0.499$.
There also exists $\beta>1$ such that $f_1(\beta)=f_2(\beta)$.
Indeed, assume that $\lambda_1$ is the largest eigenvalue of the adjacency matrix of the graph $H_4$ and
let $\vu_1$ be the corresponding unit eigenvector.
Since $\vu_{1,1}^2>\vu_{1,5}^2$, it follows from (\ref{eq-diag}) that
there exists $\beta_0>0$ such that $f_1(\beta)>f_2(\beta)$ for all $\beta\ge\beta_0$.
Since $f_1(1)<f_2(1)$, we get that there exists $\beta\in (1,\beta_0)$ such that $f_1(\beta)=f_2(\beta)$;
a numerical computation shows that the value of such $\beta$ is approximately $1.912$.
We remark that each of the graphs $H_m$ with $m\in\NN$ greater than 1, has six different eigenvalues
with the structure of corresponding eigenvectors similar to that of $H_4$.

We would like to conclude with an intuitive explanation behind the construction of the graph $H_m$.
The graph $H_m$ has vertices of degree $m$ and $m+1$, and
any pair of vertices of the same degree can be mapped to each other by an automorphism of $H_m$.
The values of the diagonal entries of $\eexp^{\beta\mA}$
are controlled by the diagonal entries of $\mA^2$ for $\beta$ close to zero,
by the diagonal entries of $\mA^3$ for larger (but still small) values of $\beta$,
then by the diagonal entries of $\mA^4$, etc.
As $k$ grows, the diagonal of $\mA^k$ becomes proportional to the Perron-Frobenius eigenvector of $\mA$.
Hence, the diagonal entries of $\eexp^{\beta \mA}$ are proportional to the degrees of the corresponding vertices for the first regime of $\beta$, and to the eigenvector centrality of the vertices for $\beta$ in the third regime (for a precise analysis, see~\cite{benziklymko-2015}).
In the graph $H_m$,
the vertex degrees and Perron-Frobenius eigenvector values produce the same ordering on the vertices.
On the other hand, the diagonal entries of $\eexp^{\beta \mA}$ corresponding to the vertices of degree $m$ become larger than those corresponding to the vertices of degree $m+1$ in the middle regime of $\beta$,
since the vertices of degree $m$ are contained in many triangles and cycles of length four.
This explains the behavior of the functions $f_1$ and $f_2$ that
we have observed in the proof of Theorem~\ref{thm-counter}.

\section{Temperatures classifying walk-regularity}
\label{sec-temp}

We start by observing that
a graph $G$ achieves the maximum walk entropy for at most finitely many temperatures
unless $G$ is walk-regular.

\begin{theorem}
\label{thm-temp}
If a graph $G$ is not walk-regular, then there are only finitely many $\beta>0$ such that $S^V(G,\beta)=\log n_G$.
\end{theorem}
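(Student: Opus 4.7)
The plan is to combine the explicit formula (\ref{eq-diag}) for the diagonal entries of $\eexp^{\beta \mA}$ with the fact that a nonzero real linear combination of distinct real exponentials has only finitely many real zeros. In contrast to Theorem~\ref{thm-benzi}, which merely rules out an accumulation point in the set $\{\beta : S^V(G,\beta) = \log n_G\}$, we must rule out accumulation at infinity as well; this is where the exponential-polynomial structure (rather than just analyticity) will be used.

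First I would reduce to pairwise comparisons of diagonal entries. Writing $f_i(\beta) := \left[\eexp^{\beta\mA}\right]_{ii}$, the identity $S^V(G,\beta) = \log n_G$ holds exactly when all $f_i(\beta)$ are equal, so the set of such $\beta>0$ is contained in $\{\beta > 0 : f_i(\beta) = f_j(\beta)\}$ for every choice of $i,j$. It therefore suffices to exhibit one pair $(i,j)$ for which this latter set is finite. Since $f_i^{(\ell)}(0) = \left[\mA^\ell\right]_{ii}$, and since $G$ is not walk-regular, there exist indices $i,j$ and some $\ell \geq 1$ with $\left[\mA^\ell\right]_{ii} \neq \left[\mA^\ell\right]_{jj}$, and for that pair the analytic function $g(\beta) := f_i(\beta) - f_j(\beta)$ is not identically zero.

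Next I would use (\ref{eq-diag}) to write $g$ as an exponential polynomial. Grouping terms by the distinct eigenvalues $\mu_1 < \mu_2 < \cdots < \mu_N$ of $\mA$,
\begin{equation*}
g(\beta) = \sum_{r=1}^{N} c_r \, \eexp^{\mu_r \beta}, \qquad c_r = \sum_{k:\,\lambda_k = \mu_r}\bigl(\vu_{k,i}^2 - \vu_{k,j}^2\bigr),
\end{equation*}
with not all $c_r$ equal to zero (otherwise $g \equiv 0$). Let $r^\star$ be the largest index with $c_{r^\star} \neq 0$. Then
\begin{equation*}
g(\beta) \, \eexp^{-\mu_{r^\star}\beta} \;=\; c_{r^\star} + \sum_{r < r^\star} c_r \, \eexp^{(\mu_r - \mu_{r^\star})\beta} \;\longrightarrow\; c_{r^\star} \neq 0
\end{equation*}
as $\beta \to \infty$, so $g$ has constant sign for all sufficiently large $\beta$. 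Hence the positive zero set of $g$ is contained in some bounded interval $(0, B]$.

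Finally I would invoke analyticity to pass from boundedness to finiteness: $g$ is an entire analytic function that is not identically zero, so its zeros are isolated, and only finitely many can lie in the compact interval $[0,B]$. This produces a pair $(i,j)$ for which $\{\beta > 0 : f_i(\beta) = f_j(\beta)\}$ is finite, and therefore $\{\beta > 0 : S^V(G,\beta) = \log n_G\}$ is finite as well. The only substantive step is recognizing that one must control the behavior at $+\infty$ via the dominant exponential (which is not available from analyticity alone); all other ingredients are routine.
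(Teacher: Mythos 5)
Your proof is correct and follows essentially the same route as the paper: both decompose $f_i-f_j$ over the distinct eigenvalues via (\ref{eq-diag}) and use the dominant exponential with nonzero coefficient to show the difference has constant sign for large $\beta$, confining the maximizing temperatures to a bounded interval. The only difference is cosmetic --- where you argue directly from the definition of walk-regularity and the isolated-zeros property of a nonzero analytic function, the paper invokes Theorem~\ref{thm-benzi} both to produce a pair with $f_i\neq f_j$ and to turn the resulting accumulation point into a contradiction.
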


\begin{proof}
We proceed by contradiction.
Suppose that there exists a graph $G$ that is not walk-regular
but the set $I$ of $\beta\ge 0$ such that $S^V(G,\beta)=\log n_G$ is infinite.
Since the diagonal entries of $\eexp^{\beta \mA}$ are continuous functions of $\beta$ bounded away from zero,
it follows that $S^V(G,\beta)$ is continuous and the set $I$ is closed.
Let $\mA$ be the adjacency matrix of $G$,
$\lambda_1,\ldots,\lambda_k$ all distinct eigenvalues of $\mA$, and
$f_i(\beta)$ the $i$-th diagonal entry of $\eexp^{\beta \mA}$, $i=1,\ldots,n_G$.
We can assume that $\lambda_1>\cdots>\lambda_k$.
By (\ref{eq-diag}), there exist non-negative reals $a_{ij}$, $i=1,\ldots,n_G$ and $j=1,\ldots,k$ such that
\begin{equation}
f_i(\beta)=\sum_{j=1}^k a_{ij}\cdot\eexp^{\beta\lambda_j}\label{eq-temp}
\end{equation}
for every $i=1,\ldots,n_G$ and $\beta\ge 0$.
If $f_1(\beta)=\cdots=f_{n_G}(\beta)$ for all $\beta>0$, $G$ would be walk-regular by Theorem~\ref{thm-benzi}.
Hence, $f_i\not=f_{i'}$ for some $i\not=i'$.  By symmetry, we can assume that $f_1\not=f_2$.

Let $j$ be the smallest integer such that $a_{1j}\not=a_{2j}$.
We can assume by symmetry that $a_{1j}>a_{2j}$, which implies that
$$\lim_{\beta\to\infty} \left(f_1(\beta)-f_2(\beta)\right)=\infty\;\mbox{.}$$
It follows that there exists $\beta_0$ such that $f_1(\beta)\not=f_2(\beta)$ for all $\beta\ge\beta_0$,
i.e., the set $I$ is a subset of the interval $[0,\beta_0)$.
Since the set $I$ is infinite,
it has an accumulation point,
which implies that $G$ is walk-regular by Theorem~\ref{thm-benzi}, contrary to our original assumption.
\end{proof}

The next corollary immediately follows from Theorem~\ref{thm-temp}.

\begin{corollary}\label{cor-temp}
There exists $\beta_0>0$ such that the following holds:
a graph $G$ is walk-regular if and only if $S^V(G,\beta_0)=\log n_G$.
\end{corollary}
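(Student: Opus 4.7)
The plan is to derive Corollary~\ref{cor-temp} from Theorem~\ref{thm-temp} by a simple cardinality argument: the family of all (isomorphism classes of) finite simple graphs is countable, and Theorem~\ref{thm-temp} tells us that each non--walk-regular graph rules out only finitely many temperatures; the set of "bad" temperatures for the whole family is therefore countable, and its complement in $(0,\infty)$ contains the desired $\beta_0$ (in fact, uncountably many such $\beta_0$, which is the "infinitely many" remark made in the introduction).

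In more detail, I would first enumerate all finite simple graphs up to isomorphism as $G_1,G_2,\ldots$ and, for each non--walk-regular $G_n$, define
\[
I_n\;=\;\bigl\{\beta>0 \,:\, S^V(G_n,\beta)=\log n_{G_n}\bigr\}.
\]
By Theorem~\ref{thm-temp}, each set $I_n$ is finite. Hence the union $B=\bigcup_n I_n$, taken over all non--walk-regular $G_n$, is a countable union of finite sets and therefore countable. The interval $(0,\infty)$ is uncountable, so $(0,\infty)\setminus B$ is non-empty (and in fact uncountable and dense).

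Next I would verify that any $\beta_0\in(0,\infty)\setminus B$ has the claimed property. If $G$ is walk-regular, then by the elementary observation recalled in the introduction, $S^V(G,\beta)=\log n_G$ for every $\beta\geq 0$, in particular for $\beta_0$. Conversely, if $G$ is not walk-regular, then $G$ appears in our enumeration as some $G_n$, and the choice $\beta_0\notin I_n$ means $S^V(G,\beta_0)\neq\log n_G$; since $S^V(G,\beta_0)\in[0,\log n_G]$, we actually get $S^V(G,\beta_0)<\log n_G$. This gives the biconditional.

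There is no real obstacle here beyond noticing that the right quantification is "one $\beta_0$ for all graphs simultaneously" (as opposed to the trivial per-graph version), and consequently that one has to take a union over all non--walk-regular graphs, which is harmless because there are only countably many of them and each contributes only finitely many exceptional temperatures by Theorem~\ref{thm-temp}. The same argument shows that in fact a dense uncountable set of such $\beta_0$ exists, matching the stronger claim phrased in the introduction.
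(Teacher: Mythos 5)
Your proposal is correct and follows essentially the same route as the paper: the paper likewise forms the set of all ``bad'' temperatures over all non--walk-regular graphs, observes it is countable because each such graph contributes only finitely many by Theorem~\ref{thm-temp}, and picks $\beta_0$ in the complement. Your version merely spells out the enumeration of graphs and the verification of the biconditional more explicitly.
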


\begin{proof}
Let $X$ be the set of all $\beta>0$ such that
there exists a graph $G$ that is not walk-regular and $S^V(G,\beta)=\log n_G$.
Since there are only finitely many such $\beta$ for each non--walk-regular graph $G$ by Theorem~\ref{thm-temp},
the set $X$ is countable.
Hence, there exists $\beta_0\in (0,\infty)\setminus X$ and
any such $\beta_0$ has the property claimed in the statement of the corollary.
\end{proof}

\section{Concluding remarks}

Corollary~\ref{cor-temp} shows that there are temperatures $\beta_0>0$ such that,
for any graph $G$, the graph $G$ is walk-regular if and only if its walk entropy for $\beta_0$ is $\log n_G$.
Unfortunately, we were not able to explicitly find any such $\beta_0$, and so it remains an open problem to identify a value $\beta_0$ with this property.
It could be the case that $\beta_0=1$ is such a value of interest, as
conjectured by Estrada~\cite{bib-estrada13}.

\begin{conjecture}[Estrada~\cite{bib-estrada13}, Conjecture 3]
A graph $G$ is walk-regular if and only if $S^V(G,1)=\log n_G$.
\end{conjecture}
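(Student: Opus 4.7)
The plan is to exploit the Lindemann--Weierstrass transcendence theorem, leveraging the fact that $\beta=1$ is algebraic. First I would use the spectral decomposition: letting $\mu_1>\cdots>\mu_r$ denote the distinct eigenvalues of the adjacency matrix $\mA$ and $P_l$ the orthogonal projection onto the $\mu_l$-eigenspace, one has
\[ [\eexp^{\mA}]_{ii} \;=\; \sum_{l=1}^{r} (P_l)_{ii}\,\eexp^{\mu_l}. \]
The crucial input is that both the eigenvalues and the projection entries are algebraic numbers: the $\mu_l$ are roots of the integer-coefficient characteristic polynomial of $\mA$, while each projection admits the resolvent expression $P_l = \prod_{m\ne l}(\mA-\mu_m\matbd{I})/(\mu_l-\mu_m)$, whose entries therefore lie in the algebraic number field $\mathbb{Q}(\mu_1,\ldots,\mu_r)$.

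Assuming $S^V(G,1)=\log n_G$, all diagonal entries of $\eexp^{\mA}$ coincide, so for every pair of vertices $i,j$,
\[ \sum_{l=1}^{r} \bigl((P_l)_{ii}-(P_l)_{jj}\bigr)\,\eexp^{\mu_l} \;=\; 0, \]
which is a vanishing linear combination of $\eexp^{\mu_1},\ldots,\eexp^{\mu_r}$ with algebraic coefficients. The Lindemann--Weierstrass theorem guarantees that for distinct algebraic numbers $\mu_1,\ldots,\mu_r$, the values $\eexp^{\mu_l}$ are linearly independent over the algebraic numbers, so every coefficient $(P_l)_{ii}-(P_l)_{jj}$ must vanish. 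Substituting into $[\mA^k]_{ii}=\sum_l \mu_l^k (P_l)_{ii}$ yields $[\mA^k]_{ii}=[\mA^k]_{jj}$ for every $k\ge 0$ and every pair $i,j$, so $G$ is walk-regular; the reverse direction is immediate from the definition.

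The main obstacle I anticipate is the careful verification that $(P_l)_{ii}$ really is algebraic, not merely real: this requires the interpolation identity above together with the nonvanishing of its denominator (which follows from distinctness of the eigenvalues), and a check that $P_l$ indeed coincides with the spectral projector on the full eigenspace regardless of multiplicities. Note that the identical argument goes through at any nonzero \emph{algebraic} value of $\beta$, since $\beta\mu_1,\ldots,\beta\mu_r$ remain distinct algebraic numbers. This is consistent with Theorem~\ref{thm-counter}: the exceptional temperatures $\beta\approx 0.499$ and $\beta\approx 1.912$ at which $H_4$ attains maximum walk entropy must necessarily be transcendental.
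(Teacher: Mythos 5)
You should first be aware that the paper contains no proof of this statement: it is quoted verbatim as an open conjecture of Estrada, and the authors explicitly say they were unable to exhibit any concrete temperature that classifies walk-regularity---their Corollary~\ref{cor-temp} only establishes the \emph{existence} of such a $\beta_0$ by a non-constructive countability argument. That said, your argument withstands scrutiny at every step and, as far as I can tell, is a genuine proof. The eigenvalues $\mu_1,\ldots,\mu_r$ are roots of the integer-coefficient characteristic polynomial, hence algebraic, and distinct by construction; since $\mA$ is real symmetric its minimal polynomial is the squarefree product $\prod_{l}(x-\mu_l)$, so the Lagrange interpolant $L_l(x)=\prod_{m\ne l}(x-\mu_m)/(\mu_l-\mu_m)$ does satisfy $L_l(\mA)=P_l$ irrespective of multiplicities, and its entries lie in $\mathbb{Q}(\mu_1,\ldots,\mu_r)\subseteq\overline{\mathbb{Q}}$. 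The form of Lindemann--Weierstrass you invoke (exponentials of distinct algebraic numbers are linearly independent over $\overline{\mathbb{Q}}$) is the standard Baker reformulation and applies verbatim, forcing $(P_l)_{ii}=(P_l)_{jj}$ for all $l$ and hence $[\mA^k]_{ii}=[\mA^k]_{jj}$ for every $k$, which is walk-regularity; the converse direction is indeed immediate. What this buys relative to the paper is substantial: it settles Estrada's conjecture at $\beta=1$, it settles the paper's own rational-$\beta$ conjecture in the stronger form that \emph{every} nonzero algebraic temperature classifies walk-regularity, it upgrades Corollary~\ref{cor-temp} from ``some $\beta_0$ exists'' to the explicit value $\beta_0=1$, and it shows that the exceptional temperatures $\beta\approx 0.499$ and $\beta\approx 1.912$ of the graph $H_4$ in Theorem~\ref{thm-counter} must be transcendental, exactly as you note. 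In a final write-up you should only make sure to (i) record the equivalence between $S^V(G,1)=\log n_G$ and the equality of all diagonal entries of $\eexp^{\mA}$, which the paper already states in its introduction, and (ii) give a precise citation for the Baker form of the Lindemann--Weierstrass theorem.
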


It could even be the case that for every non--walk-regular $G$, the walk entropy $S^V(G,\beta)$ is not maximized at any positive, rational value $\beta_0$.

\begin{conjecture}
A graph $G$ is walk-regular if and only if there exists a rational $\beta>0$ such that $S^V(G,\beta)=\log n_G$.
\end{conjecture}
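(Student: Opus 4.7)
The plan is to apply the Lindemann--Weierstrass theorem from transcendence theory; the algebraic constraints it imposes on sums of exponentials with algebraic coefficients and algebraic exponents should be strong enough to force walk-regularity whenever the walk entropy is maximized at any rational temperature $\beta_0$.

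The setup is the spectral decomposition
$$\left[\eexp^{\beta \mA}\right]_{ii} = \sum_{k=1}^{p} [P_k]_{ii}\,\eexp^{\beta \mu_k},$$
where $\mu_1 > \cdots > \mu_p$ are the distinct eigenvalues of $\mA$ and $P_k$ is the orthogonal projection onto the $\mu_k$-eigenspace. The crucial algebraic observation is that each $P_k$ is a polynomial in $\mA$ via the Lagrange formula $P_k = \prod_{j \neq k}(\mA - \mu_j \matbd{I})/(\mu_k - \mu_j)$, with coefficients in $\mathbb{Q}(\mu_1, \ldots, \mu_p)$; since $\mA$ has integer entries and the $\mu_k$ are algebraic integers, every entry $[P_k]_{ii}$ is an algebraic number.

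Now suppose $\beta_0 \in \mathbb{Q}_{>0}$ satisfies $S^V(G, \beta_0) = \log n_G$, so the diagonal of $\eexp^{\beta_0 \mA}$ is constant. For every pair $i, j$ this gives
$$\sum_{k=1}^{p} \bigl([P_k]_{ii} - [P_k]_{jj}\bigr)\,\eexp^{\beta_0 \mu_k} = 0.$$
The exponents $\beta_0 \mu_1, \ldots, \beta_0 \mu_p$ are distinct algebraic numbers, and the standard corollary of Lindemann--Weierstrass states that $\eexp^{\alpha_1}, \ldots, \eexp^{\alpha_n}$ are linearly independent over the field $\overline{\mathbb{Q}}$ of algebraic numbers whenever the $\alpha_j$ are distinct algebraic numbers. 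Hence every coefficient $[P_k]_{ii} - [P_k]_{jj}$ vanishes, so each $P_k$ has constant diagonal, which gives $[\eexp^{\beta \mA}]_{ii}$ independent of $i$ for every $\beta \geq 0$; expanding in a power series then shows $[\mA^\ell]_{ii}$ is independent of $i$ for every $\ell$, i.e., $G$ is walk-regular. The reverse implication is immediate, since walk-regularity gives $S^V(G, \beta) = \log n_G$ for all $\beta \geq 0$.

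The main concern with this plan is philosophical rather than technical: the conjecture is explicitly posed as open, yet the transcendence argument above appears to settle it. I would therefore carefully re-verify the algebraicity of the $[P_k]_{ii}$ and the exact form of the Lindemann--Weierstrass corollary I am invoking. If the plan is correct, the truly interesting remaining problem---presumably the authors' real intent---is whether an \emph{elementary} proof can be given, or whether the specific value $\beta_0 = 1$ of Estrada's conjecture can be handled without appealing to transcendence theory.
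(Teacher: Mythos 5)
The paper offers no proof of this statement---it is posed as an open conjecture in the concluding remarks---so there is nothing of the authors' to compare your argument against; what you have written appears to be a complete and correct proof, and in fact of a stronger statement, since the argument works verbatim for any nonzero \emph{algebraic} $\beta_0$ and therefore also settles Estrada's $\beta=1$ conjecture quoted just above. The three ingredients all check out: (i) each spectral projector $P_k$ is the Lagrange interpolation polynomial $\prod_{j\neq k}(\mA-\mu_j\matbd{I})/(\mu_k-\mu_j)$ evaluated at the integer matrix $\mA$, so its entries lie in $\mathbb{Q}(\mu_1,\dots,\mu_p)\subseteq\overline{\mathbb{Q}}$ (this projector formulation is the right choice---algebraicity of the individual $\vu_{k,i}^2$ appearing in (\ref{eq-diag}) is not clear, but their sum over an eigenspace is exactly $[P_k]_{ii}$); (ii) the exponents $\beta_0\mu_1,\dots,\beta_0\mu_p$ are distinct algebraic numbers since $\beta_0\neq 0$ is rational and the $\mu_k$ are distinct algebraic integers; (iii) Baker's form of Lindemann--Weierstrass does give linear independence of $\eexp^{\beta_0\mu_1},\dots,\eexp^{\beta_0\mu_p}$ over $\overline{\mathbb{Q}}$, so every coefficient $[P_k]_{ii}-[P_k]_{jj}$ vanishes, every $\mA^\ell=\sum_k\mu_k^\ell P_k$ has constant diagonal, and $G$ is walk-regular. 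Your conclusion is also consistent with Theorem~\ref{thm-counter}: it merely forces the two temperatures $\beta\approx 0.499$ and $\beta\approx 1.912$ at which $H_4$ attains maximum walk entropy to be transcendental, which contradicts nothing in the paper. So your ``philosophical'' worry should be resolved in your favor: the conjecture was open when the paper was written, and the transcendence-theoretic route is a genuine resolution rather than a trap; the only remaining care needed is to state the Lindemann--Weierstrass corollary precisely (distinct algebraic exponents, algebraic coefficients not all zero, nonvanishing sum) when writing this up.
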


Theorem~\ref{thm-temp} asserts that
if a graph $G$ is not walk-regular,
then the set of temperatures $\beta$ such that $S^V(G,\beta)=\log n_G$ is finite.
We believe that it is possible to bound the size of this set in terms of the number of vertices of $G$ as follows.

\begin{conjecture}
If a graph $G$ is not walk-regular, then there are at most $n_G-1$ values $\beta>0$ such that $S^V(G,\beta)=\log n_G$.
\end{conjecture}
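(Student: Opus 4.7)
The plan is to combine the exponential-polynomial representation of the diagonal entries of $\eexp^{\beta\mA}$, already used in~(\ref{eq-diag}) and~(\ref{eq-temp}), with the classical bound on the number of real zeros of a linear combination of distinct real exponentials.

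First, I would write $f_i(\beta)=\sum_{j=1}^k a_{ij}\eexp^{\beta\lambda_j}$ as in~(\ref{eq-temp}), where $\lambda_1>\cdots>\lambda_k$ are the $k\le n_G$ distinct eigenvalues of the adjacency matrix $\mA$ of $G$. The equality $S^V(G,\beta)=\log n_G$ is equivalent to $f_1(\beta)=\cdots=f_{n_G}(\beta)$. Since $G$ is not walk-regular, Theorem~\ref{thm-benzi} ensures that these functions are not all identically equal, so there exist indices $i\ne i'$ for which
$$\Delta(\beta):=f_i(\beta)-f_{i'}(\beta)=\sum_{j=1}^k (a_{ij}-a_{i'j})\,\eexp^{\beta\lambda_j}$$
is a non-zero real exponential polynomial involving at most $k\le n_G$ distinct exponents.

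The central step would be to invoke the classical theorem that a non-zero function of the form $\sum_{j=1}^m c_j\eexp^{\mu_j\beta}$ with distinct real $\mu_j$ has at most $m-1$ real zeros. I would prove this by a short induction on $m$: multiplying by $\eexp^{-\mu_1\beta}$ preserves the zero set, and differentiating the resulting function produces an exponential polynomial with only $m-1$ terms (corresponding to the exponents $\mu_j-\mu_1$ for $j\ge 2$), to which the induction hypothesis applies; Rolle's theorem then forces the zero count of the original function to exceed that of its derivative by at most one. Applied to $\Delta$, this yields at most $k-1\le n_G-1$ real zeros, and in particular at most $n_G-1$ positive real zeros. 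Since every $\beta>0$ with $S^V(G,\beta)=\log n_G$ must be a zero of $\Delta$, the desired bound follows.

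The hard part will not be establishing this bound, which is essentially immediate once the classical tool is in hand, but rather deciding whether it is sharp. The argument actually delivers the stronger inequality $|I|\le k-1$, where $k$ is the number of distinct eigenvalues of $\mA$, and it is unclear a priori whether any non--walk-regular graph can attain $n_G-1$ maximizing temperatures. Constructing an extremal family realizing this bound, or sharpening the estimate by a finer analysis of the coefficient differences $a_{ij}-a_{i'j}$ (which are dictated by the eigenvector structure of $\mA$), appears to be the more delicate part of a complete treatment.
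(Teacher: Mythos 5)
The paper does not prove this statement: it appears only as an open conjecture in the concluding remarks, and the paper's Theorem~\ref{thm-temp} establishes merely that the set $I=\{\beta>0: S^V(G,\beta)=\log n_G\}$ is finite, via a closedness/accumulation-point argument combined with Theorem~\ref{thm-benzi} that yields no quantitative bound. So there is no proof of the authors' to compare yours against, and your proposal has to stand on its own merits. Having checked it, I believe it does: it is a correct and complete argument, and it in fact gives the sharper bound $|I|\le k-1$, where $k$ is the number of distinct eigenvalues of $\mA$. The two points requiring care both hold. First, since $G$ is not walk-regular, the functions $f_i$ are not all identical (their derivatives at $\beta=0$ are the diagonal entries of the powers $\mA^\ell$), so a pair with $f_i\ne f_{i'}$ exists, and because the functions $\eexp^{\lambda_j\beta}$ for distinct real $\lambda_j$ are linearly independent, the difference $\Delta=f_i-f_{i'}$ has at least one nonzero coefficient in the representation~(\ref{eq-temp}). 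Second, every maximizing temperature forces \emph{all} diagonal entries of $\eexp^{\beta\mA}$ to coincide, hence is a zero of this single fixed $\Delta$; your Rolle-plus-induction proof that a nonzero combination of $m$ distinct real exponentials has at most $m-1$ real zeros is the standard Laguerre/Descartes-type argument and is sound, giving $|I|\le m-1\le k-1\le n_G-1$. Your closing paragraph about sharpness is immaterial to the conjecture as stated, which asks only for the upper bound. Given that the authors left this open while already having~(\ref{eq-temp}) in hand, you should state and prove the zero-counting lemma in full rather than merely citing it as classical, but I see no gap in the argument.
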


\section*{Acknowledgments}
The authors would like to thank Eric Horton for useful discussions on non-walk-regular graphs that maximize other notions of graph entropy.

\end{document}